\newtheorem{theorem}{Theorem}[section]
\newtheorem{lemma}[theorem]{Lemma}
\theoremstyle{definition}
\newcommand{\asi}{\textnormal{asi}}
\newcommand{\res}{\upharpoonright}
\title{Definable K\H{o}nig theorems}
\author{Matt Bowen, Felix Weilacher}
\email{matthewbowen2015@gmail.com, fweilach@andrew.cmu.edu}
\begin{document}

\maketitle

\begin{abstract}
    Let $X$ be a Polish space with Borel probability measure $\mu,$ and let $G$ be a Borel graph on $X$ with no odd cycles and maximum degree $\Delta(G).$  We show that the Baire measurable edge chromatic number of $G$ is at most $\Delta(G)+1$, and if $G$ is $\mu$-hyperfinite then the $\mu$-measurable edge chromatic number obeys the same bound. More generally, we show that $G$ has Borel edge chromatic number at most $\Delta(G)$ plus its asymptotic separation index.
\end{abstract}

\section{Introduction}

A classic theorem of K\H{o}nig states that every bipartite graph $G$ admits a (proper) edge coloring using $\Delta(G)$ many colors.  While this result is generally stated for finite graphs only, the compactness principle ensures that the same bound holds even when $G$ is infinite.  However, as the compactness principle in this context is equivalent to the existence of non-pinciple ultrafilters, obtaining bounds on chromatic numbers this way produces colorings that are poorly behaved with respect to any topological or measure structure on the graph.

The field of descriptive combinatorics is roughly dedicated to the problem of obtaining graph theoretic results that do respect underlying topological and measure theoretic structure.  The typical objects of study are Borel graphs, i.e. graphs whose vertex sets are a Polish space $(X,\tau)$ and whose edge sets are Borel subsets of $X^2.$  This field essentially originated with the work of Kechris, Solecki, and Todorcevic in \cite{kst}, and many applications have been found in areas such as descriptive set theory, group dynamics, and finite combinatorics.

Given a Borel graph $G$ with compatible Borel probability measure $\mu,$ the Borel edge chromatic number, $\chi'_{B}(G),$ is the smallest cardinal $\kappa$ so that $G$ admits Borel proper edge coloring using $\kappa$ colors. The Baire measurable edge chromatic number, $\chi'_{BM}(G),$ is the minimum of $\chi'_B(G \res C),$ where $C$ varies over $\tau$-comeagre $G$-invariant Borel sets. The $\mu$-measurable edge chromatic number, $\chi'_\mu(G),$ is defined analogously.  

The study of these parameters was begun in \cite{kst}, where it was shown that the greedy upper bound $\chi'_B(G)\leq 2\Delta(G)-1$ holds, while the irrational rotation graph, $G_\alpha,$ gives a simple example of a $2$-regular acylic graph where $2=\chi'(G_\alpha)<\chi'_B(G_\alpha)=\chi_\mu(G_\alpha)=\chi_{BM}(G_\alpha)=3.$  Constructions of Marks in \cite{Marks} show that the greedy bound cannot always be improved, even in the acyclic case, in stark contrast with K\H{o}nig's bound.  In the special case of $\mu$-preserving graphs better positive results are known: Cs\'{o}ka, Lippner, and Pikhurko showed that bipartite $\mu$-invariant graphs satisfy $\chi'_\mu(G)\leq \Delta(G)+1$ in \cite{csoka.lippner.pikhurko}, and this was recently improved by Pikhurko and Greb\'{\i}k to work for not necessarily bipartite graphs in  \cite{pikhurko.grebik}, matching the optimal general bound in the finite case and improving on an earlier bound of Bernshteyn \cite{Bernshteyn.lll}.  In the Baire and not necessarily measure preserving settings, Marks \cite{Marks} showed that graphs with $\Delta(G)\leq 3$ satisfy $\chi'_{BM}(G),\chi'_\mu(G)\leq 4,$ and Kechris and Marks \cite{kechris.marks} showed that acyclic, $d$-regular graphs satisfy $\chi'_{BM}=d$ whenever $d\geq 3.$ In the Borel setting the edge chromatic numbers of Schreier graphs of abelian groups were very recently characterized in \cite{bht, grebik.rozhon, weilacher.abelian}, the latter of whom also proved a $\Delta(G)+1$ bound for some actions of polynomial growth groups, but few other general results seem to be known.  

\vspace{2mm}

In this paper we prove a Baire measurable version of K\H{o}nig's theorem. 

\vspace{2.5mm}

\begin{theorem}\label{BM konig}
Let $G$ be a bipartite Borel graph. Then $\chi'_{BM}(G)\leq \Delta(G)+1.$
\end{theorem}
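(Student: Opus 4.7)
The natural approach is to deduce Theorem~\ref{BM konig} from the more general Borel bound announced in the abstract --- namely, $\chi'_B(H) \leq \Delta(H) + \asi(H)$ for bipartite Borel graphs $H$ --- combined with an auxiliary Baire-category structural fact: every bounded-degree Borel graph $G$ on a Polish space admits a $G$-invariant comeager Borel set $C$ with $\asi(G \res C) \leq 1$. Granting both ingredients, the reduction is a single line: $G \res C$ is bipartite with $\Delta(G \res C) \leq \Delta(G)$ and $\asi(G \res C) \leq 1$, so $\chi'_B(G \res C) \leq \Delta(G) + 1$, which by definition is an upper bound on $\chi'_{BM}(G)$.

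The structural fact itself can be established by a standard generic construction. Unfolding the definition of $\asi$, it suffices to produce, for each $r \geq 1$, a $G$-invariant comeager Borel set $C_r$ together with a Borel partition $C_r = A_0^r \sqcup A_1^r$ such that each piece, when viewed in the $r$-th power graph $G_r$, has connected components of bounded $G_r$-diameter. One proceeds by enumerating a countable basis of the topology on $X$ and greedily carving off finite chunks of $G_r$-diameter at most a fixed threshold, alternating the labels $0$ and $1$ at adjacent chunk boundaries; a Kuratowski--Ulam argument then confirms that the uncovered points form a meager set. Intersecting the $C_r$ over all $r$ and saturating under $G$ yields the required $C$.

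I expect the main difficulty to lie in the general Borel bound $\chi'_B \leq \Delta + \asi$ rather than in this reduction, since it is the former that carries the serious combinatorial content --- a Borel adaptation of a K\H{o}nig-style augmenting-path scheme in which the separation index provides just enough quantitative control to make the augmentation terminate in a Borel way. The bipartiteness hypothesis enters only through that bound; the structural lemma above needs no acyclicity. The irrational rotation example from the introduction confirms that the ``$+1$'' cannot be dropped even for $\Delta=2$ acyclic graphs, so this route gives the expected optimal result.
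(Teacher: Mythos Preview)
Your proposal is correct and follows essentially the same route as the paper: deduce Theorem~\ref{BM konig} from the general bound $\chi'_B(H)\le\Delta(H)+\asi(H)$ (the paper's Theorem~\ref{asi alt}) together with the fact that any locally finite Borel graph restricts to $\asi\le 1$ on an invariant comeager set. The only difference is emphasis---the paper cites the comeager $\asi\le 1$ fact as known (essentially Conley--Miller) and proves the $\asi$ bound in detail, whereas you sketch the former and defer the latter.
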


\vspace{2mm}

We can also obtain the same result in the (not necessarily pmp) measure setting, so long as the graph is hyperfinite $\mu$-a.e., i.e. when some invariant conull induced subgraph of $G$ is a countable increasing union of Borel graphs with finite connected components (component finite).  Note that every locally countable Borel graph is hyperfinite on an invariant comeagre set, and that many naturally arising graphs, such as the Schreier graphs of free Borel actions of amenable groups, are hyperfinite a.e.

\vspace{2mm}

\begin{theorem}\label{mu konig}
Let $X$ be a Polish space with probability measure $\mu,$ and let $G$ be a $\mu$-hyperfinite bipartite Borel graph on $X$.  Then $\chi'_\mu(G)\leq \Delta(G)+1.$
\end{theorem}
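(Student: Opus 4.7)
The plan is to reduce Theorem \ref{mu konig} to the Baire-measurable version just established, Theorem \ref{BM konig}, by exploiting $\mu$-hyperfiniteness to pass between the measure-theoretic and Baire-category settings.

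First I would unpack the hypothesis. By $\mu$-hyperfiniteness we may restrict to a $G$-invariant $\mu$-conull Borel set $Y \subseteq X$ on which $G \res Y = \bigcup_n G_n$ for some increasing sequence of Borel subgraphs $G_n$ with finite connected components. Each $G_n$ is bipartite as a subgraph of $G$, so K\H{o}nig's theorem applied componentwise, together with a standard Luzin--Novikov selection on the finite space of proper edge colorings of each component, produces a Borel proper edge coloring $c_n: E(G_n) \to \{1,\ldots,\Delta(G)\}$.

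Next I would perform a topology refinement. The standard move for hyperfinite problems is to construct a finer Polish topology $\tau'$ on $Y$ with the same Borel $\sigma$-algebra, in which each $G_n$-component is clopen and $G \res Y$ remains a Borel graph; one obtains such a topology by freely adjoining countably many Borel sets (e.g.\ codes for the $G_n$-components) while preserving Polishness. The critical property to arrange is that every $\tau'$-comeager $G$-invariant Borel subset of $Y$ is $\mu$-conull. The $\mu$-hyperfiniteness hypothesis supplies the ingredients needed to align the two notions of largeness on the $G$-invariant $\sigma$-algebra: each $G_n$-component carries a canonical uniform probability measure, and these can be averaged to identify Baire-genericity in $\tau'$ with $\mu$-almost-sure behavior. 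With such a $\tau'$ in hand, Theorem \ref{BM konig} produces a $\tau'$-Baire measurable proper edge coloring of $G \res Y$ with $\Delta(G)+1$ colors, defined on a $\tau'$-comeager $G$-invariant Borel subset $Z \subseteq Y$; by construction $Z$ is $\mu$-conull, yielding $\chi'_\mu(G) \leq \Delta(G)+1$.

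The main obstacle is the topology-refinement step: arranging that $\tau'$-comeagerness implies $\mu$-conullness on invariant Borel sets. This is the nontrivial technical content of the hyperfinite bridge, and without it the reduction to Theorem \ref{BM konig} breaks down. If no off-the-shelf transfer principle applies cleanly, my fallback would be to combine the $c_n$ directly using Kempe-chain swaps: bipartiteness forces the chains in each finite $G_n$-component to be short alternating paths, and the extra color $\Delta(G)+1$ provides the slack to resolve conflicts at the boundary where $G_n$ grows to $G_{n+1}$, with all choices made Borel-measurably by selecting minimum-length chains under a fixed Borel edge-enumeration.
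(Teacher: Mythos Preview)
Your proposal has a genuine gap at its central step, and the paper's route is entirely different.

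The paper does \emph{not} deduce Theorem~\ref{mu konig} from Theorem~\ref{BM konig}. Both are corollaries of the single Borel result Theorem~\ref{asi alt}: $\chi'_B(G) \leq \Delta(G) + \asi(G)$ for bipartite $G$. The $\mu$-hyperfinite case then follows by invoking the Conley--Miller ``toast'' construction, which produces a $G$-invariant $\mu$-conull Borel set $C$ with $\asi(G \res C) \leq 1$; applying Theorem~\ref{asi alt} to $G \res C$ gives a genuine Borel $(\Delta(G)+1)$-edge-coloring on $C$, hence $\chi'_\mu(G) \leq \Delta(G)+1$. No topology change and no category-to-measure transfer is involved.

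Your proposed bridge, by contrast, rests on producing a finer Polish topology $\tau'$ in which every $G$-invariant $\tau'$-comeager Borel set is $\mu$-conull. You yourself flag this as ``the nontrivial technical content,'' and indeed there is no off-the-shelf principle that delivers it. Topology-change theorems of Becker--Kechris type let you make Borel sets clopen or actions continuous, but they give no control over the relationship between $\tau'$-category and a fixed measure $\mu$; in general comeager null sets persist under any such refinement. The heuristic you offer (averaging uniform measures on finite $G_n$-components) does not produce such an alignment: those averages need not recover $\mu$, and even if they did, this says nothing about which sets are $\tau'$-comeager. So the reduction to Theorem~\ref{BM konig} does not go through as written.

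Your fallback sketch with Kempe chains is closer in spirit to what the paper actually does at the combinatorial level, but the real issue is controlling where the conflicts live as $n$ grows; that is exactly what the $\asi \leq 1$ structure (a single ``barrier'' set $S$ with component-finite complement) supplies, and obtaining it in the measure setting is the content of the Conley--Miller input, not something you can improvise from the raw hyperfinite filtration.
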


\vspace{2mm}

Unfortunately there is no hope of proving the same bound in the Borel setting, as a result of Conley, Jackson, Marks, Seward, and Tucker-Drob \cite{CJMSTD} shows that there are hyperfinite, $d$-regular acyclic graphs with $\chi'_B(G)=2d-1$ for any $d.$  Still, in the special case of $2$-ended graphs we get the following strengthening:

\vspace{2mm}

\begin{theorem}\label{2 bip}
Let $G$ be a bipartite Borel graph whose connected components all have 2 ends.  Then $\chi'_B(G)\leq \Delta(G)+1.$
\end{theorem}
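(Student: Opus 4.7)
The plan is to derive this from the general bound $\chi'_B(G)\le\Delta(G)+\asi(G)$ for bipartite Borel graphs announced in the abstract, by verifying that every locally finite Borel graph all of whose components are $2$-ended satisfies $\asi(G)\le 1$.

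By definition, $\asi(G)\le 1$ asks that for every $r\in\mathbb{N}$ there be a Borel partition $V(G)=U_0\sqcup U_1$ such that within each $U_i$ the $G$-connected components are finite and pairwise at $G$-distance greater than $r$. The classical structural fact about locally finite $2$-ended graphs is that each such component $C$ admits a surjective $1$-Lipschitz ``height'' function $\phi_C\colon V(C)\to\mathbb{Z}$ with finite fibers. Given a Borel family of such heights, setting $U_i=\{v:\lfloor \phi_{C(v)}(v)/N\rfloor\equiv i\pmod 2\}$ for $N>r$ witnesses $\asi(G)\le 1$: inside each component $C$, each color class is a disjoint union of the finite blocks $\phi_C^{-1}([kN,(k+1)N))$ of appropriate parity, and consecutive same-colored blocks are separated by an opposite-colored block of width $N>r$.

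Constructing the family $(\phi_C)$ Borel-ly is the substantive step. Fix a Borel linear order $<$ on $X$ and let $v_0^C$ denote the $<$-minimum of each component $C$. Since $C$ is $2$-ended, for all sufficiently large $n$ the complement $C\setminus B_n(v_0^C)$ has exactly two infinite connected components; I would use $<$ on their minimum vertices to designate one as ``positive''. With this orientation fixed, $\phi_C$ can be assembled by combining the distance from $v_0^C$ with a signed indicator of the chosen side, adjusted to guarantee $1$-Lipschitzness (for instance by routing through a canonical bi-infinite geodesic through $v_0^C$ realizing both ends).

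The principal obstacle is precisely this Borel orientation step: the two ends of a $2$-ended graph are intrinsically indistinguishable, so breaking the symmetry measurably is what the argument turns on. Once the height functions are in place, $\asi(G)\le 1$ follows immediately, and the bound $\chi'_B(G)\le \Delta(G)+1$ is obtained by applying the general asi theorem to the bipartite graph $G$.
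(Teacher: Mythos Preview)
Your overall strategy matches the paper's: Theorem~\ref{2 bip} is deduced from Theorem~\ref{asi alt} (the bound $\chi'_B(G)\le\Delta(G)+\asi(G)$) together with the fact that locally finite Borel graphs whose components are all $2$-ended satisfy $\asi(G)\le 1$. The paper does not prove this last fact itself but cites it as essentially contained in \cite{conley.miller.toast} and \cite{miller.2end}.

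Your attempted direct proof of $\asi\le 1$ via Borel height functions has a genuine gap, however, and it is more serious than the orientation issue you flag. The step ``let $v_0^C$ denote the $<$-minimum of each component $C$'' already fails: a countably infinite component need not have a minimum with respect to an arbitrary Borel linear order, and a Borel choice of one vertex per component is precisely a Borel transversal for the connectedness equivalence relation. For many $2$-ended Borel graphs---the irrational rotation being the standard example---this relation is not smooth, so no such selector exists. In fact, the very existence of your height functions would \emph{force} smoothness: if $\phi$ were a Borel map with each $\phi_C\colon C\to\mathbb{Z}$ surjective, $1$-Lipschitz, and with finite fibers, then $\phi^{-1}(0)$ would be a Borel set meeting every component in a finite nonempty subset, from which a Borel transversal can be extracted via Lusin--Novikov. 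Thus a Borel family of height functions with the properties you list cannot exist for any non-smooth $2$-ended Borel graph, and the approach cannot be salvaged by addressing only the orientation ambiguity. The arguments in the cited references obtain the required partitions by more local means that avoid any global $\mathbb{Z}$-indexing of components.
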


\vspace{2mm}

This problem was considered by Weilacher in \cite{weilacher}, where a bound of $\Delta(G)+1$ was proven for actions of $2$-ended groups and where it was conjectured that this bound suffices in general.  This confirms the conjecture for bipartite graphs.

The previous three theorems will follow from a single more general one which is stated in terms of asymptotic separation index, a parameter recently defined in \cite{dimension}. We say a locally finite Borel graph $G$ has asymptotic separation index $\leq s$, written $\asi(G) \leq s$, where $s \in \omega$, if for any $r \in \omega$ we can find Borel sets $U_0,\ldots,U_s$ which cover $X$ such that for each $i$, the graph $G^{\leq r} \res U_i$ is component finite, where $G^{\leq r}$ is the Borel graph on $X$ for which two points are adjacent if the path distance between them in $G$ is at most $r$. Our general theorem is:

\begin{theorem}\label{asi alt}
Let $G$ be a bipartite Borel graph with finite asymptotic separation index. Then $\chi_B'(G) \leq \Delta(G) + \asi(G)$.
\end{theorem}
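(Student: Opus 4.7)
Let $s = \asi(G)$ and $\Delta = \Delta(G)$. The plan is to combine a layerwise application of K\"onig's theorem with a Borel list edge coloring extension, using the $s$ extra colors as slack to absorb boundary effects.

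First, I would invoke the definition of asymptotic separation index (initially with $r = 1$, though potentially a larger scale later) to produce a Borel cover $U_0, \ldots, U_s$ of $X$ such that each $G \res U_i$ is component finite, and refine these to a Borel partition $V_i := U_i \setminus \bigcup_{j < i} U_j$; each $G \res V_i$ then also has finite connected components. Since each finite component of $G \res V_i$ is bipartite, K\"onig's theorem provides a proper $\Delta$-edge coloring, and routine Borel selection yields a Borel proper coloring $c_i : E(G \res V_i) \to \{1, \ldots, \Delta\}$. Their union $c = \bigsqcup_i c_i$ is a partial proper coloring of $G$ that handles every edge internal to some $V_i$.

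Next, I would extend $c$ to the ``crossing'' edges (those with endpoints in distinct layers) using the full palette $\{1, \ldots, \Delta + s\}$. A vertex-by-vertex counting argument shows that at each vertex $v$ in the crossing subgraph, the induced list edge coloring problem has list size at least $\deg^{\mathrm{cross}}_v + s$: every vertex already has at most $\Delta$ incident edges in $G$, yet has $\Delta + s$ colors to choose from, so even after the $\deg^{\mathrm{int}}_v$ internal colors are fixed, there are $s$ slack colors beyond what the $\deg^{\mathrm{cross}}_v$ crossings need. In the finite case, Galvin's list edge coloring theorem for bipartite graphs settles this.

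The principal obstacle is performing this extension in a Borel way, since the crossing subgraph need not be component finite. My approach is to handle it via an augmenting-path argument confined to finite tiles: each uncolored crossing edge should be colorable by a finite alternating-color path whose length is bounded in terms of $\Delta$ and $s$. Choosing $r$ large enough at the outset to exceed this combinatorial length bound would ensure such paths remain within a single finite $G^{\leq r} \res U_i$-tile, making the construction Borel. The most delicate step will be pinning down the exact length bound on augmenting paths and carrying out Borel-uniform selection across tiles; the $s$ extra slack colors should be exactly what localizes the paths within tiles of an appropriate scale.
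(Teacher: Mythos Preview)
Your reduction to list-edge-coloring the crossing subgraph is natural, but the proposal leaves open precisely the step you flag as the ``principal obstacle,'' and the sketch you give for it does not work. The hope that augmenting paths have length bounded in terms of $\Delta$ and $s$ is not justified: a Kempe $\alpha/\beta$-chain terminates only at a vertex missing $\alpha$ or $\beta$, and the $s$ \emph{other} free colors available at intermediate vertices do nothing to shorten it. Nor can such a path be confined to a tile of some $G^{\leq r}\res U_i$, since a crossing edge by definition straddles two layers and an augmenting path from it may wander through several. Concretely, on a bi-infinite line with $V_0$ the even and $V_1$ the odd integers (each giving isolated points in $G\res V_i$), every edge is crossing, your internal coloring $c$ is empty, and the ``extension'' step is just the original problem of Borel $3$-edge-coloring the line; nothing in the proposal supplies a mechanism for this. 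A genuine argument is still needed here, not merely a larger choice of $r$.

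The paper's proof is organized differently and avoids augmenting paths altogether. Rather than coloring layers and then repairing boundaries, it iteratively removes edge sets that lower the maximum degree by one. The structural input (Lemma~\ref{barriers alt}) is a family of Borel barrier sets $S_i^j$ for $i<\Delta$, $j<s$, with each $G\res(X\setminus S_i)$ component finite and, crucially, with the $S_i^j$ for fixed $j$ and distinct $i$ pairwise far apart. At stage $i$ (Lemma~\ref{inductive alt}) one obtains a Borel matching $M_i$ on the component-finite bulk plus $s$ correction matchings $N_i^j$ supported in $B(S_i^j,2)$, together covering all max-degree vertices. The separation in $i$ then lets $N_0^j,\ldots,N_{\Delta-1}^j$ be merged into a single matching $N^j$, yielding $\Delta$ colors $M_i$ and $s$ colors $N^j$. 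Boundary defects are thus absorbed into $s$ globally coherent matchings whose disjoint supports are arranged in advance, not fixed by recoloring of uncontrolled reach.
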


In \cite{weilacher.abelian}, it was asked whether a bound of $\Delta(G)+1$ holds for general Borel graphs $G$ with $\asi(G) = 1$. Again, this confirms a positive answer for bipartite graphs.

The parameter $\asi$ has been used in similar results in \cite{dimension} (Theorem 8.1). The authors in that paper also note that it is open whether there is any $G$ with $1 <\asi(G) < \infty$. Theorem \ref{asi alt} provides further motivation to study this question.

Note that if $\Delta(G)$ is infinite, Theorem \ref{asi alt} is automatic by the Lusin-Novikov uniformization theorem. Thus for the rest of the paper we assume $\Delta(G)$ is finite. Note also that Theorem \ref{asi alt} gives the classical K\H{o}nig's theorem when $\asi(G) = 0$. The following results show that Theorem \ref{asi alt} immediately implies Theorems \ref{BM konig}, \ref{mu konig}, and \ref{2 bip}:

\begin{theorem}[Essentially \cite{conley.miller.toast},\cite{miller.2end}]
Let $G$ be a locally finite Borel graph on a Polish space $X$, and $\mu$ a Borel probability measure on $X$
\begin{enumerate}
    \item There is a $G$-invariant comeager Borel set $C \subset X$ such that $\asi(G \res C) \leq 1$.
    \item If $G$ is $\mu$-hyperfinite, there is a $G$-invariant $\mu$-conull Borel set $C \subset X$ such that $\asi(G \res C) \leq 1$.
    \item If each connected component of $G$ has 2 ends, $\asi(G) = 1$.
\end{enumerate}
\end{theorem}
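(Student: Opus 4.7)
In all three cases, my goal is to produce, for every $r \in \omega$, a Borel partition $X = U_0 \sqcup U_1$ (of the appropriate invariant subset in (1) and (2)) such that $G^{\leq r} \res U_i$ is component finite for $i = 0, 1$; by definition this witnesses $\asi \leq 1$.

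\textbf{Parts (1) and (2).} The common ingredient is a Borel \emph{toast}: a Borel laminar family $\mathcal T$ of finite $G$-connected subsets exhausting the ambient space, with the property that for every $F \in \mathcal T$ there is some $F' \in \mathcal T$ containing the $G$-neighborhood of $F$. The work of Conley and Miller \cite{conley.miller.toast} produces such a toast on a $G$-invariant comeager Borel set for any locally countable Borel graph, and on a $G$-invariant $\mu$-conull Borel set when $G$ is $\mu$-hyperfinite. Given $r$ and a toast, iterating the neighborhood-absorption property $r$ times yields a Borel sub-toast in which each piece absorbs the full $r$-neighborhood of its predecessors. For each $x$, let $F_0(x) \subsetneq F_1(x) \subsetneq \cdots$ be the chain of sub-toast pieces containing $x$, and let $n(x)$ be the least $n$ with $B_r^G(x) \subseteq F_n(x)$. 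Set $U_i = \{x : n(x) \equiv i \pmod 2\}$. The $r$-buffer together with laminarity forces any two points of $U_i$ at $G$-distance at most $r$ to lie in a common (finite) sub-toast piece, so each $G^{\leq r} \res U_i$ is component finite.

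\textbf{Part (3).} For a locally finite Borel graph whose components are all $2$-ended, Miller \cite{miller.2end} provides Borel structural control on each component analogous to a $\mathbb Z$-tiling: a Borel partition of each component into finite connected tiles together with a Borel choice of one of the two ends, yielding a canonical ordering of the tiles. Given $r$, I would concatenate tiles into consecutive Borel blocks of $G$-diameter exceeding $r$ and alternate their assignment between $U_0$ and $U_1$. Consecutive blocks of the same color are then at $G$-distance strictly greater than $r$, so $G^{\leq r} \res U_i$ is a disjoint union of bounded sets and in particular component finite.

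\textbf{Main obstacle.} The subtle point in (1) and (2) is that the same partition must make components finite on \emph{both} sides simultaneously, rather than only on one side (which would give $\asi \leq s$ for some larger $s$). This forces one to refine the Conley--Miller toast so that the $r$-buffer holds uniformly at every nesting level in a Borel way, and to verify the parity scheme --- or an equivalent two-coloring of nesting depths --- has the required property. Part (3) is markedly cleaner since Miller's linear structure supplies the needed block decomposition almost by inspection.
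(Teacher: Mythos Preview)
The paper gives no proof of this statement; it simply records it and attributes it to \cite{conley.miller.toast} and \cite{miller.2end}. Your outline follows the standard route those references suggest: Conley--Miller toasts for (1) and (2), and Miller's linear structure on $2$-ended components for (3). Part (3) is fine as sketched.

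There is, however, a genuine gap in your parity scheme for (1) and (2). With only an $r$-buffer, the function $n(x)$ takes only the values $0$ and $1$, and $U_1$ can fail to have finite $G^{\leq r}$-components. On $X=\mathbb{Z}$ with the toast $\{[-n,n]:n\geq 0\}$ one has $B_1([-n,n])=[-n{-}1,n{+}1]$, so this toast is already $1$-buffered; but for every $x$ the ball $B_1(x)$ meets $\mathbb{Z}\setminus F_0(x)$, whence $n(x)=1$ identically and $U_1=\mathbb{Z}$. The error in the reasoning is that ``any two points of $U_i$ at distance $\leq r$ lie in a common finite piece'' does not by itself bound the component, since the witnessing piece may grow along a path. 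The fix is easy: thin instead to a $(2r{+}1)$-buffered sub-toast. Then for $x,y\in U_1$ with $d(x,y)\leq r$ one checks $F_1(x)=F_1(y)$ (if $F_0(y)=F_1(x)$ then $d(y,X\setminus F_0(y))\geq (2r{+}2)-r>r$, contradicting $y\in U_1$), so each $G^{\leq r}\res U_1$-component lies in a single finite $F_1$; your argument for $U_0$ already gives $F_0(x)=F_0(y)$.
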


Finally, we mention that determining if the measurable K\H{o}nig bound holds for bipartite Borel graphs that are not hyperfinite or pmp is still open.  By applying results on measurable fractional perfect matchings from \cite{bowen.kun.sabok} we may reduce the problem to determining if acyclic, leafless graphs admit measurable matchings that cover all but a sparse set of vertices.  In the hyperfinite case this reduces further to the $2$-ended case by applying Theorem A from \cite{conley.miller.pm}, which motivated the results in the present paper.  However, a recent counterexample of Kun \cite{gabor.new} shows that no analogue of \cite{conley.miller.pm} Theorem A holds for graphs that are not hyperfinite, so a different approach must be used.

\section{Proofs}

Graphs with finite asymptotic separation index can be approximated well by component finite subgraphs. A typical strategy for coloring them, therefore, is to define colorings on those components just using finite combinatorics, then somehow address global coherence. We will use the classical K\H{o}nig's theorem for the finite combinatorical part. The next lemma helps with coherence.

For $U$ a set of vertices and $r \in \omega$, define $B(U,r)$ to be the set of vertices with path distance from $U$ at most $r$, and $E(U,r)$ to be the set of vertices with path distance from $U$ exactly $r$.

\begin{lemma}\label{barriers alt}
Let $G$ be a locally finite Borel graph on $X$ with $\asi(G) \leq s \in \omega$. For any $\Delta \in \omega$, we can find Borel sets $S_i^j \subset X$ for $j < s$ and $i < \Delta$ such that, letting $S_i = \bigcup_j S_i^j$,
\begin{enumerate}
    \item The restrictions of $G$ to $B(S_{i}^j,2)$ for each $i,j$ and to $(X \setminus S_i)$ for each $i$ are component finite. 
    \item For each $j$ and $i \neq i'$, the path distance from $S_i^j$ to $S_{i'}^j$ is at least 5.
\end{enumerate}

\end{lemma}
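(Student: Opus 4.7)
My plan is to invoke the hypothesis $\asi(G) \le s$ at a large scale $r$ (it suffices to take $r \ge 10\Delta + 7$) to obtain Borel sets $U_0, \ldots, U_s$ covering $X$ such that each $G^{\le r} \res U_k$ is component finite. I will then define
\[
S_i^j := \{v \in X : d_G(v, U_{j+1}) = 5i + 1\}
\]
for $i < \Delta$ and $j < s$, so that the $S_i^j$ are thin distance layers around $U_{j+1}$. These sets are Borel because $G$ is locally finite.

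The verification will have three parts. Property (2) is immediate: $d_G(\cdot, U_{j+1})$ is $1$-Lipschitz along $G$ and takes the value $5i + 1$ on $S_i^j$, so any $G$-path from $S_i^j$ to $S_{i'}^j$ has length at least $5|i - i'| \ge 5$. For component finiteness of $G \res B(S_i^j, 2)$, I would choose for each $v \in B(S_i^j, 2)$ a nearest point $u(v) \in U_{j+1}$, which exists with $d_G(v, u(v)) \le 5i + 3$; then any $G$-adjacent pair $v, w \in B(S_i^j, 2)$ satisfies $d_G(u(v), u(w)) \le 10i + 7 \le r$. Hence every $G$-component $C$ of $B(S_i^j, 2)$ projects into a single $G^{\le r}$-component $K \subseteq U_{j+1}$, which is finite by hypothesis, and $C \subseteq B(K, 5i + 3)$ is then finite by local finiteness of $G$. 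Finally, for $G \res (X \setminus S_i)$, let $C$ be any connected component; for each $j < s$, the function $d_G(\cdot, U_{j+1})$ is $1$-Lipschitz and avoids the value $5i + 1$ on $C$, so it is either $\le 5i$ throughout $C$ or $\ge 5i + 2$ throughout $C$. If the first case occurs for some $j$, the same projection argument shows $C$ is finite; otherwise $C$ is disjoint from $U_1 \cup \cdots \cup U_s$, so $C \subseteq U_0$, and since $C$ is $G$-connected inside $U_0$ it is contained in a single (finite) $G^{\le r}$-component of $U_0$.

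I do not foresee a serious obstacle: once $r$ is chosen large enough so that the projection-based argument works uniformly for all $i < \Delta$, both halves of (1) follow from the same mechanism, and (2) is baked into the construction. The main subtlety is the dichotomy used in the second half of (1), which crucially relies on $U_0 \cup \cdots \cup U_s = X$---this is what forces components staying far from every $U_{j+1}$ (for $j < s$) to lie entirely inside $U_0$, where the $\asi$-hypothesis applies directly.
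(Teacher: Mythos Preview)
Your proposal is correct and follows essentially the same approach as the paper: invoke the $\asi$ hypothesis at a scale $r$ of order $10\Delta$ to get a cover $U_0,\ldots,U_s$, and define $S_i^j$ as the distance-$\approx 5i$ layer around one of the pieces (the paper uses $E(U_j,5i)$ where you use $E(U_{j+1},5i+1)$, a purely cosmetic shift). The only stylistic difference is that the paper verifies component finiteness via K\H{o}nig's lemma and an intermediate-value argument along infinite paths, whereas you argue directly by projecting components to a finite $G^{\le r}$-component of the relevant $U_k$; these are two phrasings of the same mechanism.
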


\begin{proof}
Let $X = U_0 \sqcup \cdots \sqcup U_{s-1} \sqcup V$ be a Borel partition of $X$ witnessing $\asi(G) \leq s$ for $r = 10\Delta+1$. That is, such that the graphs $G^{\leq r} \res U_j$ for each $j$ and $G^{\leq r} \res V$ are component finite. For each $i < \Delta$ and $j < s$, set $S_{i}^j = E(U_j,5i)$. Each $S_i^j$ is Borel since each $U_j$ is, and the distance requirement (2) is clearly satisfied.

Fix $i$. We want to show $G \res (X \setminus S_i)$ is component finite. By K\H{o}nig's lemma (not theorem!), it suffices to show that if $x_0,x_1,\ldots$ is an injective path through $G$, then it contains a point of $S_i$. We first show that for all $j$, it contains a point of $X \setminus B(U_j,5\Delta)$. If not, then for all $n$, there is a $y_n \in U_j$ with distance at most $5\Delta$ from $x_n$. Then for all $n$, the distance between $y_n$ and $y_{n+1}$ is at most $10\Delta+1 = r$, so the $y_n$'s are all in the same $G^{\leq r} \res U_j$-component, so there are only finitely many of them. But $\{x_n\} \subset B(\{y_n\},5\Delta)$ is infinite, contradicting local finiteness.

By the same proof, our path contains some point of $X \setminus V$, say $x_{n_0} \in U_j$. Let $n$ be such that $x_n \not\in B(U_j,5\Delta)$. Consider the distance from $U_j$ to $x_m$ for $m$ between $n_0$ and $n$. When $m = n_0$, it is 0. When $m = n$, it is more than $5\Delta$. When $m$ changes by 1, it can change by at most 1. Thus since $0 \leq 5i < 5\Delta$, there must be some $m$ for which $x_m \in E(U_j,5i) = S_i^j \subset S_i$.

Fix $i,j$. We want to show that $G \res B(S_i^j,2)$ is component finite. Above, we saw that any infinite injective path contains a point of distance more than $5\Delta$ from $U_j$. All points of $B(S_i^j,2)$ have distance at most $5i+2 < 5\Delta$ from $U_j$, though, so we are again done by K\H{o}nig's lemma.
\end{proof}

The proof of the classical K\H{o}nig's theorem proceeds by successively removing matchings from the initial graph, each time lowering the maximum degree. Our approach is similar, except that the edge sets removed are not quite matchings. The following lemma is our inductive step. 

We say a vertex is covered by a set of edges (e.g. a matching) if it is contained in some edge in the set.

\begin{lemma}\label{inductive alt}
Let $G$ be a bipartite Borel graph on $X$ with $\Delta(G) \leq \Delta \in \omega$. Let $s \in \omega$. Let $S^j \subset X$ for $j < s$ such that, letting $S = \bigcup_j S^j$, the restrictions of $G$ to $X \setminus S$ and to $B(S^j,2)$ for each $j$ are component finite. Then there are Borel matchings $M,N^0,\ldots,N^{s-1} \subset G$ such that each vertex of degree $\Delta$ is covered by $M \cup \bigcup_j N^j$, and for each $j$, each edge of $N^j$ is contained in $B(S^j,2)$.
\end{lemma}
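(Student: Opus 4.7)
The plan is to define the matchings $M, N^0, \ldots, N^{s-1}$ independently, exploiting the two kinds of component-finite subgraphs handed to us. For each $j < s$, apply the classical K\H{o}nig edge coloring theorem inside each (finite) component of $G \res B(S^j,2)$, whose maximum degree is at most $\Delta$, to obtain a proper $\Delta$-edge coloring; choosing, say, the lexicographically minimal such coloring with respect to a fixed Borel linear order on edges makes this assignment Borel, and we let $N^j$ denote color class $0$. Define $M$ in the same way using a Borel proper $\Delta$-edge coloring of $G \res (X \setminus S)$. Then each $N^j$ is a matching contained in $B(S^j,2)$ and $M$ is a matching.

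To verify coverage, fix a vertex $v$ of degree $\Delta$ in $G$ and split into cases based on where $v$ sits. If $v \in S^j$, then every neighbor of $v$ lies in $B(S^j,1) \subseteq B(S^j,2)$, so $v$ has degree $\Delta$ also in $G \res B(S^j,2)$, making $v$ a maximum-degree vertex in its finite component there; since every color class of a proper $\Delta$-edge coloring covers every vertex of maximum degree, $v$ is covered by $N^j$. If $v \in X \setminus S$ but some neighbor $u$ of $v$ lies in $S^j$, then $v \in B(S^j,1)$, so every neighbor of $v$ lies in $B(S^j,2)$, and the same argument shows $N^j$ covers $v$. In the remaining case $v$ and all its neighbors lie in $X \setminus S$, whence $v$ has degree $\Delta$ in $G \res (X \setminus S)$ and is covered by $M$ by the same reasoning.

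The only delicate point is the Borel measurability of extracting these K\H{o}nig colorings on a component-finite Borel graph, which is a standard Lusin-Novikov style selection: on each finite component one can pick the lex-minimal proper $\Delta$-edge coloring once a Borel linear order on edges is fixed. Note that the lemma does not require the matchings to be pairwise edge-disjoint or to assemble into a single global proper coloring — each need only be a matching individually — so there is no global coherence obstacle, and the whole construction reduces to running classical K\H{o}nig on each component-finite subgraph provided by the hypotheses.
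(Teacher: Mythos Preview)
Your proof is correct and follows essentially the same route as the paper's: both apply the classical K\H{o}nig theorem componentwise to the given component-finite Borel subgraphs $G \res (X \setminus S)$ and $G \res B(S^j,2)$ to extract Borel matchings covering the maximum-degree vertices there, and both verify coverage by the same $B(S^j,1)$ versus $X \setminus B(S,1)$ dichotomy (your three-case split is just this dichotomy unpacked). The only cosmetic difference is that you obtain the matching as color class $0$ of a full $\Delta$-edge-coloring, whereas the paper simply asserts the existence of a matching covering maximum-degree vertices; these are equivalent.
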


\begin{proof}
Consider the component finite bipartite Borel graphs $G \res (X \setminus S)$ and $G \res B(S^j,2)$ for $j < s$. By the classical K\H{o}nig's theorem, each component of each of these graphs admits a matching which covers its vertices of maximal degree. By the Lusin-Novikov uniformization theorem, there is a Borel way of picking such a matching for each component. Thus, let $M \subset G \res (X \setminus S)$ be a Borel matching covering each vertex of $G \res (X \setminus S)$-degree $\Delta$, and likewise for $N^j$ and $G \res B(S^j,2)$ for each $j$.

Let $x$ have $G$-degree $\Delta$; we need to show $x$ is covered by $M \cup \bigcup_j N^j$. If $x \in B(S^j,1)$ for some $j$, then $x$ still has degree $\Delta$ in $G \res B(S^j,2)$, so it is covered by $N^j$. Else, $x$ has degree $\Delta$ in $G \res (X \setminus S)$, so it is covered by $M$.
\end{proof}

We now prove Theorem \ref{asi alt}:

\begin{proof}
Let $\Delta = \Delta(G) \in \omega$, $s = \asi(G) \in \omega$, and $X = V(G)$. Let $S_i^j \subset X$ for $i < \Delta$, $j < s$ be as given by Lemma \ref{barriers alt}. Let $G_0 = G$.

Suppose $i < \Delta$ and we have Borel $G_i \subset G$ with maximum degree $\leq \Delta - i$. Apply Lemma \ref{inductive alt} to $G_i$ and the $S_i^j$'s to get Borel matchings $M_i,N_i^0,\ldots,N_i^{s-1}$ whose union covers each vertex of $G_i$-degree $\Delta - i$ and with each edge of $N_i^j$ contained in $B(S_i^j,2)$ for each $j$. Let $G_{i+1} = G_i \setminus (M_i \cup \bigcup_j N_i^j)$. Then $G_{i+1}$ is Borel and has maximum degree $\leq \Delta - i - 1$, so we can repeat this process.

At the end, $G_\Delta$ has maximum degree $\leq 0$, and so is empty. Thus, the matchings we removed along the way, $M_i$ and $N_i^j$ for $i < \Delta$, $j < s$, union to all of $G$. Furthermore, for each $j$, since $B(S_i^j,2)$ and $B(S_{i'}^j,2)$ are disjoint for $i \neq i'$, $\bigcup_i N_i^j$ is still a Borel matching, call it $N_j$. Now the $M_i$'s and $N_j$'s give a Borel $(\Delta + s)$-edge coloring of $G$.
\end{proof}

\section*{Acknowledgements}

The second author was partially supported by the ARCS foundation, Pittsburgh chapter.

\bibliographystyle{amsalpha} 
\bibliography{main}

\end{document}